\theoremstyle{plain}
\newtheorem{theorem}{Theorem}[section]
\newtheorem{proposition}{Proposition}[section]
\newtheorem{lemma}{Lemma}[section]
\newtheorem{problem}{Problem}[section]
\theoremstyle{definition}
\newtheorem{definition}{Definition}[section]
\theoremstyle{remark}
\newtheorem{remark}{Remark}[section]
\title{On the modified ideal sheaf}
\author{Jingcao Wu}
\begin{document}
\pagestyle{plain}
\begin{abstract}
It is a sequel to (Wu in arXiv:2003.05187). In that paper, we introduce a notion called modified ideal sheaf in order to make an asymptotic estimate for the order of the cohomology group. Here we continue to a general discussion about this notion. As an application, we study the direct images associated with a pseudo-effective line bundle.
\end{abstract}
\maketitle

\section{Introduction}
The multiplier ideal sheaf is a powerful tool in complex geometry. It is first introduced by Nadal in his work \cite{Nad90} concerning the existence of the K\"{a}hler--Einstein metric on a Fano manifold. Then numerous geometers use it to make great work. There are tremendous literatures, taking \cite{Cao14,DEL00,Dem92,Dem12,Mat14,Siu98} as examples.

One basic reason for these gorgeous applications is that it establishes the link between the analytic and algebraic geometry. In fact, in \cite{Nad90} the multiplier ideal sheaf is defined as the ideal sheaf whose germs are the holomorphic functions with $L^{2}$-bounded norm against a singular weight function. While in algebraic geometry, it is defined through its algebraic natures. One refers \cite{Laz04} for the algebraic description of the multiplier ideal sheaf. In our paper \cite{Wu20}, the multiplier ideal sheaf also plays an important role. However, in practice, we found that it is not always enough to ask the functions to be $L^{2}$-bounded.

More precisely, we were trying to estimate the $L^{\infty}$-norm of an $L$-valued $(n,q)$-form $\alpha$ against a singular metric $h$ via its $L^{2}$-norm in \cite{Wu20} (Proposition 3.2). Here $L$ is a line bundle.  Namely, it is desired to prove that
\begin{equation}\label{e11}
\sup|\alpha(x)|^{2}_{h}\leqslant C\int_{X}|\alpha|^{2}_{h}dV
\end{equation}
on a compact complex manifold $X$. The problem is that if we only assume the $L^{2}$-norm to be bounded, we cannot guarantee the $L^{\infty}$-norm is bounded a priori since $h$ may be infinite somewhere. So it seems a tough work to establish such an inequality.

As a compromise, we introduce the modified ideal sheaf $\mathfrak{I}(h)$. The basic idea is to ask the vanishing order of $f$ along the poles of $h$ a little bit higher than the case that $f\in\mathscr{I}(h)$. We collect those functions as the germs of $\mathfrak{I}(h)$. Here we use $\mathscr{I}(h)$ to present the traditional multiplier ideal sheaf. Obviously, $\mathfrak{I}(h)\subset\mathscr{I}(h)$. The explicit definition will be given in Sect.3.

We briefly recall it here for readers' convenience. When $h$ has analytic singularities, it can be computed precisely \cite{Dem12} with the help of a log-resolution $\mu:\tilde{X}\rightarrow X$. Indeed, let Jacobian divisor be $R=\sum\rho_{j}D_{j}$ where $D_{j}$ is the exceptional divisor, and let $g_{j}$ be the generator of $D_{j}$. Then the weight function $\varphi$ of $h$ after pull-back can be written as
\[
\varphi\circ\mu=a\sum\lambda_{j}\log|g_{j}|^{2}.
\]
In this situation,
\[
\mathscr{I}(h)=\mu_{\ast}\mathcal{O}_{\tilde{X}}(\sum(\rho_{j}-\lfloor a\lambda_{j}\rfloor)D_{j})
\]
with $\lfloor a\lambda_{j}\rfloor$ referring to the round down. Then its modified ideal sheaf is defined as
\[
\mathfrak{I}(h):=\mu_{\ast}\mathcal{O}_{\tilde{X}}(\sum(-\lceil a\lambda_{j}\rceil)D_{j})
\]
with $\lceil a\lambda_{j}\rceil$ referring to the round up. Then we use Demailly's approximation technique \cite{DPS01} to define the modified ideal sheaf for a metric not necessary with analytic singularities.

If we use the modified ideal sheaf instead of the original one, with loss of a little generality, we will get satisfying result. More specific, we can truly prove (\ref{e11}) (Theorem 1.1, \cite{Wu20}), hence an asymptotic estimate (Theorem 1.2, \cite{Wu20}) when $L$ is nef. This estimate is new and has good consequences, such as a partial answer to Demailly--P\u{a}un's conjecture (Theorem 1.3, \cite{Wu20}) and a Nadel-type vanishing theorem (Theorem 1.4, \cite{Wu20}). We believe that there are more treasures waiting to excavate in this notion. It's the motivation for this note.

To begin with, we list a few basic properties of the modified ideal sheaf, which more or less appeared in \cite{Wu20}.
\begin{theorem}\label{t1}
Let $X,X_{1},X_{2}$ be compact complex manifolds, and let $L,L_{1},L_{2}$ be pseudo-effective line bundles on $X$, $X_{1}$ and $X_{2}$ respectively. Assume that $\varphi,\varphi_{1},\varphi_{2}$ are singular metrics on $L$, and $\psi_{1},\psi_{2}$ are singular metrics on $L_{1},L_{2}$. Keep the notations. Then we have
\begin{enumerate}
  \item $\mathfrak{I}(\varphi)\subset\mathscr{I}(\varphi)$. There exits a quasi-plurisubharmonic function $\psi$ on $X$ such that
  \[
  \mathfrak{I}(\varphi)=\mathscr{I}(\psi).
  \]
  In particular, $\mathfrak{I}(\varphi)=\mathscr{I}(\varphi)$ iff $\varphi$ is defined by a normal crossing divisor.
  \item If $f\in\mathfrak{I}(\varphi)$, $|f|^{2}e^{-\varphi}$ vanishes near the poles unless $\varphi$ can be approximated by a family of metrics $\{\varphi_{\varepsilon}\}$ with algebraic singularities. In particular, it is always bounded.
  \item $\mathfrak{I}(\varphi)=\mathcal{O}_{X}$ iff $\varphi$ has no poles.
  \item (Superadditivity, I) Let $\pi_{i}:X_{1}\times X_{2}\rightarrow X_{i}$, $i=1,2$ be the projections. Then
  \[
  \mathfrak{I}(\psi_{1}\circ\pi_{1}+\psi_{2}\circ\pi_{2})=\pi^{\ast}_{1}\mathfrak{I}(\psi_{1})\cdot\pi^{\ast}_{2}\mathfrak{I}(\psi_{2}).
  \]
  \item (Superadditivity, II) $\mathfrak{I}(\varphi_{1})\cdot\mathfrak{I}(\varphi_{2})\subset\mathfrak{I}(\varphi_{1}+\varphi_{2})$.
\end{enumerate}
\end{theorem}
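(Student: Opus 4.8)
\emph{Proof strategy.} The plan is to prove each assertion first for a metric with analytic singularities, where it reduces to a statement about the ceilings and floors of the numbers $a\lambda_j$ along a normal crossing divisor, and then to pass to an arbitrary singular metric via the approximating family $\{\varphi_\varepsilon\}$ from Sect.~3, using that $\varphi_\varepsilon\searrow\varphi$ (so $e^{-\varphi_\varepsilon}\nearrow e^{-\varphi}$) and that $\mathfrak I(\varphi_\varepsilon)$ stabilizes as $\varepsilon\to0$. So fix a log-resolution $\mu:\tilde X\to X$ with $\varphi\circ\mu=a\sum_j\lambda_j\log|g_j|^2+O(1)$, $\sum_jD_j$ normal crossings, and Jacobian divisor $R=\sum_j\rho_jD_j$; by the formulas recalled above, a germ $f$ lies in $\mathfrak I(\varphi)$ iff $\operatorname{ord}_{D_j}(f\circ\mu)\ge\lceil a\lambda_j\rceil$ for every $j$, and in $\mathscr I(\varphi)$ iff $\operatorname{ord}_{D_j}(f\circ\mu)\ge\lfloor a\lambda_j\rfloor-\rho_j$ for every $j$.

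For (1): $\rho_j\ge0$ and $\lceil a\lambda_j\rceil\ge\lfloor a\lambda_j\rfloor$ give $\lceil a\lambda_j\rceil\ge\lfloor a\lambda_j\rfloor-\rho_j$, hence $\mathfrak I(\varphi)\subset\mathscr I(\varphi)$. To produce $\psi$, I would choose a quasi-psh function on $X$ whose pull-back to $\tilde X$ has, near the exceptional locus, the normal crossing singularity $\sum_j(\rho_j+\lceil a\lambda_j\rceil)\log|g_j|^2$ and no worse singularity elsewhere, correcting by a large multiple of a Kähler potential to control its complex Hessian; then the log-resolution formula for multiplier ideals yields
\[
\mathscr I(\psi)=\mu_\ast\mathcal O_{\tilde X}\Big(R-\sum_j(\rho_j+\lceil a\lambda_j\rceil)D_j\Big)=\mu_\ast\mathcal O_{\tilde X}\Big(-\sum_j\lceil a\lambda_j\rceil D_j\Big)=\mathfrak I(\varphi),
\]
and the case of a general metric is obtained by transporting this through the approximation. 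The ``in particular'' follows by comparing the two ideals on the resolution: $\mathfrak I(\varphi)=\mathscr I(\varphi)$ forces $\lceil a\lambda_j\rceil=\lfloor a\lambda_j\rfloor-\rho_j$ for every $j$, hence $\rho_j=0$ and $a\lambda_j\in\mathbb Z$, i.e.\ no blow-up was needed and $\varphi$ is given by a normal crossing divisor; the converse is trivial. (Alternatively the forward direction drops out of (2): $\mathscr I(\varphi)\subset\mathfrak I(\varphi)$ would make $|f|^2e^{-\varphi}$ bounded for every $f\in\mathscr I(\varphi)$, which fails as soon as a genuine blow-up is needed.)

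For (2) and (3): if $f\in\mathfrak I(\varphi)$ then $|f\circ\mu|^2\lesssim\prod_j|g_j|^{2\lceil a\lambda_j\rceil}$ locally on $\tilde X$, so
\[
\big(|f|^2e^{-\varphi}\big)\circ\mu=|f\circ\mu|^2e^{-\varphi\circ\mu}\lesssim\prod_j|g_j|^{\,2(\lceil a\lambda_j\rceil-a\lambda_j)},
\]
which is bounded, and which tends to $0$ along every $D_j$ with $a\lambda_j\notin\mathbb Z$; since $\mu$ is biholomorphic off the poles, pushing down shows that $|f|^2e^{-\varphi}$ is bounded on $X$ and vanishes near the poles, the only exception being that all the $a\lambda_j$ meeting the polar set are integers, which is exactly the ``algebraic singularities'' alternative and is preserved under the approximation. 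Then (3) is immediate: if $\varphi$ has no poles, a log-resolution has $a\lambda_j=0$ and $\mathfrak I(\varphi)=\mu_\ast\mathcal O_{\tilde X}=\mathcal O_X$; conversely $\mathfrak I(\varphi)=\mathcal O_X$ means $1\in\mathfrak I(\varphi)$, so by (2) $e^{-\varphi}$ is bounded and $\varphi$ has no poles.

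For (4) and (5), and the main difficulty: in (4), $\mu_1\times\mu_2$ resolves $\psi_1\circ\pi_1+\psi_2\circ\pi_2$, the two weights combine without cross terms into one normal crossing weight, its ceiling divisor splits as $\sum_j(-\lceil a_1\lambda_{1,j}\rceil)\pi_1^\ast D_{1,j}+\sum_k(-\lceil a_2\lambda_{2,k}\rceil)\pi_2^\ast D_{2,k}$, and the Künneth / flat-base-change identity $(\mu_1\times\mu_2)_\ast\mathcal O\big(\pi_1^\ast A_1+\pi_2^\ast A_2\big)=(\mu_1)_\ast\mathcal O(A_1)\boxtimes(\mu_2)_\ast\mathcal O(A_2)$ yields the product; for a general pair of metrics one uses that the box-sum of the approximants of $\psi_1,\psi_2$ approximates $\psi_1\circ\pi_1+\psi_2\circ\pi_2$. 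In (5), over a resolution $\mu$ adapted to $\varphi_1$ and $\varphi_2$ simultaneously one writes $\varphi_i\circ\mu=\sum_ja_i\lambda_{i,j}\log|g_j|^2+O(1)$, and for $f_i\in\mathfrak I(\varphi_i)$ gets $\operatorname{ord}_{D_j}\big((f_1f_2)\circ\mu\big)\ge\lceil a_1\lambda_{1,j}\rceil+\lceil a_2\lambda_{2,j}\rceil\ge\lceil a_1\lambda_{1,j}+a_2\lambda_{2,j}\rceil$, which is precisely the membership condition for $f_1f_2\in\mathfrak I(\varphi_1+\varphi_2)$, the general case following by approximating and letting $\varepsilon\to0$. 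The divisor inequalities, the boundedness estimate and the Künneth identity are all routine; the genuine obstacle is the reduction to analytic singularities throughout, and within (1) the existence of the global quasi-psh $\psi$ with controlled Hessian --- equivalently, that $\mathfrak I(\varphi)$ is itself an analytic multiplier ideal on $X$. This is the step I expect to demand the most care: it is exactly where the modified ideal sheaf may behave worse than the usual multiplier ideal, and it is the source both of the ``unless $\ldots$ algebraic singularities'' proviso in (2) and of the harmless $(1+\delta)$-type twists one may have to carry through the approximation in (4) and (5).
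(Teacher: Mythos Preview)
Your proposal is correct and follows the paper's approach in every part: the same log-resolution computation with $\lceil a\lambda_j\rceil$ versus $\lfloor a\lambda_j\rfloor-\rho_j$ for (1), the same pointwise estimate $\prod_j|g_j|^{2(\lceil a\lambda_j\rceil-a\lambda_j)}$ for (2)--(3), the product resolution $\mu_1\times\mu_2$ with base change for (4), and the common resolution plus $\lceil x\rceil+\lceil y\rceil\ge\lceil x+y\rceil$ for (5). The only cosmetic differences are that the paper builds $\psi$ in (1) by patching the local functions $\mu_\ast\big(\sum_j a\tau_j\log|g_j|^2\big)$ with a partition of unity (and for general $\varphi$ takes the $L^1$-limit of the $\psi_\varepsilon$) rather than correcting by a K\"{a}hler potential, and dispatches the reduction to analytic singularities in (4)--(5) with a one-line ``without loss of generality'' rather than the careful $(1+\delta)$-twist you anticipate.
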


Then we develop the harmonic theory with respect to the modified ideal sheaf. Assume that $X$ is a compact K\"{a}hler manifold, and $(L,\varphi)$ is a pseudo-effective line bundle. We use Demailly's approximation technique \cite{DPS01}, to approximate $\varphi$ by a family of metrics $\{\varphi_{\varepsilon}\}$ such that $\varphi_{\varepsilon}$ is smooth outside a subvariety $Z$. Now we define the $\Box_{\varphi}$-harmonic form $\alpha$ as the $L^{2}$-limit of a sequence of $\Box_{\varphi_{\varepsilon}}$-harmonic forms $\{\alpha_{\varepsilon}\}$ on $Y=X-Z$. Moreover, we ask that $\alpha$ and $\alpha_{\varepsilon}$ differ by a $\bar{\partial}$-exact form on $Y$. The space of $\Box_{\varphi}$-harmonic $L$-valued $(p,q)$-form is denoted by
\[
\mathcal{H}^{p,q}(X,L,\Box_{\varphi}).
\]
The whole details has been given in \cite{Wu20}, and we will present them in the text for readers' convenience. The harmonic spaces associated with $\varphi$ are then defined as
\[
\mathcal{H}^{p,q}(X,L,\mathscr{I}(\varphi)):=\{\alpha\in\mathcal{H}^{p,q}(X,L,\Delta_{\varphi});\int_{X}|\alpha|^{2}e^{-\varphi}<\infty\},
\]
and
\[
\mathcal{H}^{p,q}(X,L,\mathfrak{I}(\varphi)):=\{\alpha\in\mathcal{H}^{p,q}(X,L,\Delta_{\varphi});\int_{X}|\alpha|^{2}e^{-\psi}<\infty\}.
\]

Notice that although we define $\alpha$ as a limit on $Y$, it actually has a good extension property. We actually have the following singular version of Hodge's theorem.
\begin{theorem}[A singular version of Hodge's theorem]\label{t2}
Let $(X,\omega)$ be a compact K\"{a}hler manifold, and let $(L,\varphi)$ be a pseudo-effective line bundle on $X$. Assume that there exists a section $s$ of some multiple $L^{k}$ such that $\sup_{X}|s|_{k\varphi}<\infty$. Then the following relationship hold:
\begin{equation}\label{e12}
\begin{split}
   &\mathcal{H}^{n,q}(X,L\otimes\mathscr{I}(\varphi),\Box_{\varphi})\simeq H^{n,q}(X,L\otimes\mathscr{I}(\varphi)),\\
   &\mathcal{H}^{n,q}(X,L\otimes\mathfrak{I}(\varphi),\Box_{\varphi})\\
   \simeq&\mathrm{Im}(i_{n,q}:H^{n,q}(X,L\otimes\mathfrak{I}(\varphi))\rightarrow H^{n,q}(X,L\otimes\mathscr{I}(\varphi))).
\end{split}
\end{equation}
In particular, when $\varphi$ is smooth, $\alpha\in\mathcal{H}^{n,q}_{\leqslant0}(X,L,\Box_{\varphi})$ if and only if $\alpha$ is $\Box_{\varphi}$-harmonic in the usual sense.
\end{theorem}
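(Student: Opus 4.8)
The plan is to run a weighted singular $L^2$-Hodge theory on $Y=X\setminus Z$, where $\varphi_\varepsilon\downarrow\varphi$ is the Demailly regularization \cite{DPS01} (smooth on $Y$), and then transport the conclusion to $X$ through the $L^2$-resolution of multiplier ideal sheaves. First I would, after enlarging $Z$ so that it contains the zero set of $s$, equip $Y$ with a \emph{complete} K\"ahler metric $\hat\omega$, obtained from $\omega$ by adding $i\partial\bar\partial$ of a function that tends to $+\infty$ along $Z$ and is built out of $-\log|s|^{2}_{k\varphi}$; the hypothesis $\sup_X|s|_{k\varphi}<\infty$ is precisely what makes such a potential available on all of $Y$. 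On $(Y,\hat\omega)$ with weight $e^{-\varphi_\varepsilon}$ the Andreotti--Vesentini/H\"ormander estimates (see \cite{Dem12}) yield a genuine Hodge decomposition, so the space of $\Box_{\varphi_\varepsilon}$-harmonic $(n,q)$-forms is isomorphic to $H^{q}_{(2)}(Y,K_X\otimes L,e^{-\varphi_\varepsilon})$; moreover the $L^2$-$\bar\partial$-complex for a metric complete near $Z$ with the weight $e^{-\varphi_\varepsilon}$ is a fine resolution of $\mathcal O(K_X\otimes L)\otimes\mathscr I(\varphi_\varepsilon)$, so this group is canonically $H^{n,q}(X,L\otimes\mathscr I(\varphi_\varepsilon))$ and is independent of the choice of $\hat\omega$.

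For the first identity in (\ref{e12}) I would let $\varepsilon\to0$. A $\Box_\varphi$-harmonic form $\alpha$ — by definition the $L^2$-limit of $\Box_{\varphi_\varepsilon}$-harmonic forms $\alpha_\varepsilon$ — is $\bar\partial$-closed, being a limit of closed forms, and when $\int_X|\alpha|^2e^{-\varphi}<\infty$ it represents a class $[\alpha]\in H^{q}_{(2)}(X,K_X\otimes L,e^{-\varphi})=H^{n,q}(X,L\otimes\mathscr I(\varphi))$; this defines the map. For injectivity, if $[\alpha]=0$ we write $\alpha=\bar\partial\beta$ with $\beta\in L^2(e^{-\varphi})$ on $Y$, while $\bar\partial^{*}_{\varphi}\alpha=0$ follows by passing to the limit in $\bar\partial^{*}_{\varphi_\varepsilon}\alpha_\varepsilon=0$ (using $\varphi_\varepsilon\downarrow\varphi$, interior elliptic estimates on compact subsets of $Y$, and that $\alpha-\alpha_\varepsilon$ is $\bar\partial$-exact); an integration by parts on the complete manifold $Y$, justified by cutoffs built from $|s|^{2}_{k\varphi}$, then gives $\|\alpha\|^{2}_{\varphi}=\langle\bar\partial\beta,\alpha\rangle_{\varphi}=\langle\beta,\bar\partial^{*}_{\varphi}\alpha\rangle_{\varphi}=0$, so $\alpha=0$. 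For surjectivity, given a $\bar\partial$-closed $u\in L^2(e^{-\varphi})$ we have $u\in L^2(e^{-\varphi_\varepsilon})$ since $\varphi_\varepsilon\geq\varphi$; letting $\alpha_\varepsilon$ be the $\Box_{\varphi_\varepsilon}$-harmonic projection of $u$ on $Y$, the form $u-\alpha_\varepsilon$ is $\bar\partial$-exact and $\|\alpha_\varepsilon\|_{\varphi_\varepsilon}\leq\|u\|_{\varphi_\varepsilon}\leq\|u\|_{\varphi}$, so a diagonal argument together with monotone convergence ($e^{-\varphi_\varepsilon}\uparrow e^{-\varphi}$) extracts a limit $\alpha$ with $\int_X|\alpha|^2e^{-\varphi}<\infty$; $\alpha$ is $\Box_\varphi$-harmonic by construction, and the extension property established in \cite{Wu20} promotes the exactness of $u-\alpha$ to $L^2(e^{-\varphi})$ on $X$, whence $[\alpha]=[u]$.

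For the second identity, Theorem \ref{t1}(1) provides a quasi-plurisubharmonic $\psi$ with $\mathscr I(\psi)=\mathfrak I(\varphi)$ and $\psi\leq\varphi+O(1)$, so $\mathcal H^{n,q}(X,L\otimes\mathfrak I(\varphi),\Box_\varphi)$ is the subspace of $\Box_\varphi$-harmonic forms satisfying $\int_X|\alpha|^2e^{-\psi}<\infty$, and the first identity identifies it with a subspace $V\subseteq H^{n,q}(X,L\otimes\mathscr I(\varphi))$. The inclusion $V\subseteq\mathrm{Im}(i_{n,q})$ is formal: such an $\alpha$ is $\bar\partial$-closed and lies in $L^2(e^{-\psi})$, hence defines a class in $H^{q}_{(2)}(X,K_X\otimes L,e^{-\psi})=H^{n,q}(X,L\otimes\mathfrak I(\varphi))$ whose image under $i_{n,q}$ (the map induced by $\mathfrak I(\varphi)\hookrightarrow\mathscr I(\varphi)$, which on the $L^2$-side sends $[u]$ to $[u]$) is exactly $[\alpha]$. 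For the reverse inclusion one represents a class of $H^{n,q}(X,L\otimes\mathfrak I(\varphi))$ by a $\bar\partial$-closed $u\in L^2(e^{-\psi})$, applies the surjectivity step to get a $\Box_\varphi$-harmonic $\alpha$ with $[\alpha]=[u]$ in $H^{n,q}(X,L\otimes\mathscr I(\varphi))$, and must then check that $\int_X|\alpha|^2e^{-\psi}<\infty$; here the fine behaviour of $\Box_\varphi$-harmonic forms near $Z$ — the extension property, which says that along $Z$ they vanish to the order prescribed by the rounded ideal $\mathfrak I(\varphi)$ — is indispensable. Finally, when $\varphi$ is smooth we take $Z=\varnothing$ and $\varphi_\varepsilon\equiv\varphi$, so $\mathscr I(\varphi)=\mathfrak I(\varphi)=\mathcal O_X$ by Theorem \ref{t1}(3), the map $i_{n,q}$ is the identity, and the defining conditions of $\mathcal H^{n,q}_{\leqslant 0}(X,L,\Box_\varphi)$ reduce to ordinary $\Box_\varphi$-harmonicity, giving the last assertion.

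The step I expect to be the main obstacle is the reverse inclusion $V\supseteq\mathrm{Im}(i_{n,q})$: since the $e^{-\psi}$-energy dominates the $e^{-\varphi}$-energy and not conversely, the uniform $L^2(e^{-\varphi_\varepsilon})$ bound on the harmonic projections $\alpha_\varepsilon$ does not by itself control their $e^{-\psi}$-energy, so one really needs the near-$Z$ regularity of $\Box_\varphi$-harmonic forms rather than a soft limiting argument. A secondary technical burden is the rigorous justification of the integrations by parts and of the weak $L^2$-limits on the noncompact complete manifold $Y$, both of which are handled by the $|s|^{2}_{k\varphi}$-cutoffs together with interior elliptic estimates on compact subsets of $Y$.
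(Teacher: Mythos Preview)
Your plan is essentially the paper's: Demailly's approximation $\varphi_\varepsilon$, a complete K\"ahler metric on $Y=X\setminus Z$, the Hodge decomposition for each $(\varphi_\varepsilon,\hat\omega)$, a limiting argument, and an extension across $Z$. The main presentational differences are these. For the first isomorphism the paper does not argue injectivity/surjectivity separately; instead it builds explicit mutually inverse maps using the operator $S^q$ of \cite{Wu17}, which sends an $(n,q)$-harmonic form on $Y$ to a $\bar\partial$-closed $L$-valued $(n{-}q,0)$-form and then extends it holomorphically over $Z$, with inverse given by wedging with $\omega^q$; this packages the extension step and the isomorphism at once, and is where the K\"ahler hypothesis is actually used. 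For the second isomorphism the paper computes $\mathrm{Im}\,i_{n,q}$ by a short chain of equalities comparing the two orthogonal decompositions of $\mathrm{Ker}\,\bar\partial$ (with weights $e^{-\varphi}$ and $e^{-\psi}$), rather than proving two inclusions; the crucial step---precisely the one you flag as the obstacle---is Lemma~\ref{l31}, a pointwise estimate $|\tilde\alpha|^2_{h_0}\leqslant|\alpha|^2_{h_0}$ near the poles for the harmonic representative, which is what guarantees that the $\Box_\varphi$-harmonic projection of a form in $L^2(e^{-\psi})$ stays in $L^2(e^{-\psi})$. Your identification of this as the heart of the matter is exactly right, and the ingredient you need is that lemma (proved in \cite{Wu20}) rather than a generic ``near-$Z$ regularity'' statement.
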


We remark here that Theorem \ref{t2} works for a compact complex manifold if $L$ is moreover assumed to be nef. This statement can be found in \cite{Wu20}. It is worth to mention that in \cite{Mat14}, a similar result (Lemma 3.2) has been shown for a line bundle $(L,h)$ such that $h$ is smooth outside a subvariety and $i\Theta_{L,h}\geqslant0$. Theorem \ref{t2}, which benefits a lot form their work, generalises it.

Based on this theorem, we can extend main results of J. Koll\'{a}r in \cite{Ko86a,Ko86b} to the case that involves the modified ideal sheaf. We surmise them as the following theorem. It includes a Koll\'{a}r-type injectivity theorem and a torsion freeness theorem. This type of generalisation has been fully studied in recent years by \cite{Fuj12,FuM16,GoM17,Mat14,Mat15,Mat16,Siu82}.
\begin{theorem}\label{t3}
Let $f:X\rightarrow Y$ be a smooth fibration between two compact K\"{a}hler manifolds, and let $(L,\varphi)$ be a pseudo-effective line bundle on $X$. Assume that $s$ is a section of some multiple $L^{k-1}$ such that
\[
s\in H^{0}(X,L^{k-1}\otimes\mathfrak{I}((k-1)\varphi)).
\]
Then the following theorems hold:

1. [Injectivity theorem] The following map
\[
\mathcal{H}^{n,q}(X,L\otimes\mathfrak{I}(\varphi))\xrightarrow{\otimes s}\mathcal{H}^{n,q}(X,L^{k}\otimes\mathfrak{I}(k\varphi))
\]
induced by tensor with $s$ is injective.

2. [Torsion freeness] If $\varphi|_{X_{y}}$ is well-defined (i.e. not identically equal to $\infty$) for all $y\in Y$, the image of
\[
i_{n,q}:R^{q}f_{\ast}(K_{X/Y}\otimes L\otimes\mathfrak{I}(\varphi))\rightarrow R^{q}f_{\ast}(K_{X/Y}\otimes L\otimes\mathscr{I}(\varphi))
\]
is reflexive for all $q\geqslant0$.
\end{theorem}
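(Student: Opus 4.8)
The plan is to prove the two statements separately: the injectivity assertion from an Enoki-type argument, and the torsion-freeness assertion from a fibrewise form of Theorem~\ref{t2}.

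\emph{The injectivity theorem.} Fix $\alpha\in\mathcal{H}^{n,q}(X,L\otimes\mathfrak{I}(\varphi))$ and write $\mathfrak{I}(\varphi)=\mathscr{I}(\psi)$ as in Theorem~\ref{t1}(1). First I would note that $\otimes s$ does land in the stated target: by Theorem~\ref{t1}(2), $\sup_{X}|s|^{2}_{(k-1)\varphi}<\infty$, and by Theorem~\ref{t1}(5) (Superadditivity II) together with this boundedness, $s\alpha$ represents an element of $\mathcal{H}^{n,q}(X,L^{k}\otimes\mathfrak{I}(k\varphi))$ once it is known to be $\Box_{k\varphi}$-harmonic. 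Granting this, injectivity is immediate: $s$ is a nonzero holomorphic section, so $s\alpha=0$ forces $\alpha\equiv0$ on the dense open set where $s\neq0$, hence on all of $Y$. To see that $s\alpha$ is harmonic I would run Enoki's argument on $Y=X-Z$ for the approximants $\varphi_{\varepsilon}$ of \cite{DPS01}, which are smooth on $Y$ with $i\Theta_{\varphi_{\varepsilon}}\geqslant-\varepsilon\omega$; let $\{\alpha_{\varepsilon}\}$ be the $\Box_{\varphi_{\varepsilon}}$-harmonic forms defining $\alpha$. From the Bochner--Kodaira--Nakano identity, the $(n,q)$-bidegree and $i\Theta_{\varphi_{\varepsilon}}\geqslant-\varepsilon\omega$ one gets $\|D'_{\varphi_{\varepsilon}}\alpha_{\varepsilon}\|^{2}+\|D'^{*}_{\varphi_{\varepsilon}}\alpha_{\varepsilon}\|^{2}=O(\varepsilon)$. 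Since $\bar\partial s=0$ and $(D's)\wedge\alpha_{\varepsilon}$ has bidegree $(n+1,q)=0$, the form $u_{\varepsilon}:=s\alpha_{\varepsilon}$ satisfies $\bar\partial u_{\varepsilon}=0$ and $D'_{k\varphi_{\varepsilon}}u_{\varepsilon}=0$; and the K\"{a}hler identity $D'^{*}=i[\Lambda,\bar\partial]$ with $\bar\partial s=0$ gives, after a short computation, $D'^{*}_{k\varphi_{\varepsilon}}u_{\varepsilon}=s\,D'^{*}_{\varphi_{\varepsilon}}\alpha_{\varepsilon}$, of $k\varphi_{\varepsilon}$-norm $O(\sqrt{\varepsilon})$, while $[i\Theta_{k\varphi_{\varepsilon}},\Lambda]u_{\varepsilon}=ks\,[i\Theta_{\varphi_{\varepsilon}},\Lambda]\alpha_{\varepsilon}$ pairs with $u_{\varepsilon}$ to give $O(\varepsilon)$. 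Feeding these into the identity
\[
\|\bar\partial^{*}_{k\varphi_{\varepsilon}}u_{\varepsilon}\|^{2}=\|D'^{*}_{k\varphi_{\varepsilon}}u_{\varepsilon}\|^{2}+\langle\!\langle[i\Theta_{k\varphi_{\varepsilon}},\Lambda]u_{\varepsilon},u_{\varepsilon}\rangle\!\rangle,
\]
valid because $\bar\partial u_{\varepsilon}=D'u_{\varepsilon}=0$, shows $\bar\partial^{*}_{k\varphi_{\varepsilon}}(s\alpha_{\varepsilon})\to0$; passing to the $L^{2}$-limit and using that $s\alpha-s\alpha_{\varepsilon}=s(\alpha-\alpha_{\varepsilon})$ and $s\alpha_{\varepsilon}$ are $\bar\partial$-closed, we conclude $s\alpha\in\mathcal{H}^{n,q}(X,L^{k}\otimes\mathfrak{I}(k\varphi))$, exactly as in the construction of the harmonic theory in \cite{Wu20} (the completeness and boundary issues on the noncompact $Y$ being handled there).

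\emph{Torsion freeness.} Put $\mathcal{F}=K_{X/Y}\otimes L\otimes\mathscr{I}(\varphi)$. Since $\varphi|_{X_{y}}$ is well-defined for all $y$, Theorem~\ref{t2} admits a fibrewise version over each open $V\subset Y$: sections over $V$ of $R^{q}f_{\ast}\mathcal{F}$, resp.\ of $\mathrm{Im}(i_{n,q})$, are represented by relative $\Box_{\varphi}$-harmonic $(n,q)$-forms on $f^{-1}(V)$, the $\mathrm{Im}(i_{n,q})$-case distinguished by $\int_{X_{y}}|\tilde\alpha|^{2}e^{-\psi}<\infty$, and such a form restricts to a harmonic form on each fibre. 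I would use this first for torsion freeness: if $g\in\mathcal{O}_{Y}(V)$ is nonzero and $g\tau=0$ with $\tau\leftrightarrow\tilde\alpha$, then on each fibre $X_{y}$ the form $(f^{\ast}g)\tilde\alpha|_{X_{y}}=g(y)\,\tilde\alpha|_{X_{y}}$ is harmonic and $\bar\partial$-exact, hence $0$; so $\tilde\alpha$ vanishes on the fibres over $\{g\neq0\}$ and therefore, by smoothness on $Y-Z$, $\tilde\alpha\equiv0$, i.e.\ $\tau=0$. Thus $R^{q}f_{\ast}\mathcal{F}$, and with it $\mathrm{Im}(i_{n,q})\subset R^{q}f_{\ast}\mathcal{F}$, is torsion free. (Alternatively, torsion freeness can be deduced from the injectivity theorem by the standard Koll\'{a}r-type argument, using that $f^{\ast}$ of an ample line bundle on $Y$ carries no poles, so the modified ideal sheaves are unchanged by Theorem~\ref{t1}(3).)

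On the smooth base $Y$ a torsion-free coherent sheaf is reflexive iff its sections extend across every closed analytic subset of codimension $\geqslant2$, so it remains to prove this for $\mathrm{Im}(i_{n,q})$. Let $A\subset V$ be analytic of codimension $\geqslant2$ and $\sigma$ a section of $\mathrm{Im}(i_{n,q})$ over $V-A$; since $f$ is a submersion, $f^{-1}(A)$ has codimension $\geqslant2$ in $f^{-1}(V)$, and $\sigma$ corresponds to a relative $\Box_{\varphi}$-harmonic form $\tilde\alpha$ on $f^{-1}(V-A)$ with $\int_{X_{y}}|\tilde\alpha|^{2}e^{-\psi}<\infty$. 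From the explicit description of the modified ideal sheaf in the introduction one checks $\psi\leqslant\varphi+C$, so $\int_{X_{y}}|\tilde\alpha|^{2}e^{-\varphi}<\infty$ as well, and by the $L^{\infty}$-estimate underlying the theory (cf.\ (\ref{e11}) and Theorem~\ref{t1}(2)) the pointwise norm $|\tilde\alpha|^{2}_{\varphi}$ is then locally bounded. A bounded harmonic form being removable across a codimension-$\geqslant2$ subset of a complex manifold, $\tilde\alpha$ extends to a $\Box_{\varphi}$-harmonic form on $f^{-1}(V)$ with the same integrability; the fibrewise Theorem~\ref{t2} over $V$ turns this extension into the desired section of $\mathrm{Im}(i_{n,q})$ over $V$, which completes the proof of reflexivity.

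\emph{Main obstacle.} In the injectivity part the one delicate point is the interchange of the Enoki estimate with the limit $\varepsilon\to0$, but this is already built into the machinery of \cite{Wu20}. The real work lies in the torsion-freeness part: establishing the fibrewise version of Theorem~\ref{t2} at the sheaf level, so that $i_{n,q}$ and the harmonic description genuinely compute $R^{q}f_{\ast}$ over every $V$ (this is exactly where the hypothesis that $\varphi|_{X_{y}}$ is well-defined enters), and then verifying that the removable-singularity extension stays inside the $\mathfrak{I}(\varphi)$-space and maps to the same class under $i_{n,q}$. I expect most of the argument to be spent there.
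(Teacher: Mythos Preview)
Your argument for part~1 is essentially the paper's: both observe that once $s\alpha$ is known to lie in $\mathcal{H}^{n,q}(X,L^{k}\otimes\mathfrak{I}(k\varphi))$ the injectivity is immediate from the non-vanishing of $s$, and the whole content is the well-definedness. The paper simply cites Theorem~\ref{t2} for this, whereas you write out the Enoki--Bochner--Kodaira computation on $Y$ explicitly; your version is more detailed but not different in spirit.

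For part~2 your route diverges substantially from the paper's. The paper does \emph{not} argue via a fibrewise harmonic representation plus a removable-singularity/normality argument. Instead it uses the operator $S^{q}$ from the proof of Theorem~\ref{t2} (the ``hard Lefschetz''-type map sending a harmonic $(n,q)$-form to a holomorphic $(n-q,0)$-form) fibre by fibre, lifting it to a split monomorphism of sheaves
\[
S^{q}:\mathrm{Im}\,i_{n,q}\;\hookrightarrow\; f_{\ast}(\Omega^{n-q}_{X}\otimes L),
\]
with left inverse $L^{q}(\beta)=[\beta\wedge\omega^{q}]$. This places $\mathrm{Im}\,i_{n,q}$ as the kernel of the surjection $f_{\ast}(\Omega^{n-q}_{X}\otimes L)\to\mathcal{Q}^{q}:=\ker L^{q}$; since a direct image of a locally free sheaf is reflexive (Hartshorne) and $\mathcal{Q}^{q}$ is torsion free, Proposition~\ref{p21}(c),(d) give reflexivity of $\mathrm{Im}\,i_{n,q}$ in one stroke. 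In particular the paper never needs the ``fibrewise Theorem~\ref{t2} at the sheaf level'' that you flag as the main obstacle, nor any extension of singular-metric harmonic forms across codimension-$2$ sets.

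Your analytic approach is not obviously wrong, but the two points you yourself identify---making the harmonic description compute $R^{q}f_{\ast}$ over every open $V$, and justifying a removable-singularity theorem for $\Box_{\varphi}$-harmonic forms defined only as $L^{2}$-limits with respect to a singular weight---are genuine and not addressed by anything in the paper. The paper's Koll\'{a}r-style splitting bypasses both difficulties entirely; if you want a self-contained proof, that is the argument to supply.
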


%\begin{acknowledgment}
%The author want to thank Prof. Bo Berndtsson, who introduced and carefully explained this problem to him. Also the author thanks Prof. Jixiang Fu for his suggestion and encouragement.
%\end{acknowledgment}

\section{Preliminary}
\subsection{Positivity of the line bundle}
Let $X$ be a compact complex manifold, and let $L$ be a line bundle on $X$. Fix a Hermitian metric $\omega$ on $X$. We list some basic notions concerning the positivity of $L$ as follows.
\begin{definition}\label{d21}
1. $L$ is called positive if there exits a smooth metric $\varphi$ on $L$ such that $i\Theta_{L,\varphi}>0$. Equivalently, $L$ is ample, i.e. the sections of some multiple $L^{k}$ induce a embedding from $X$ to a projective space.

2. $L$ is called nef (numerically effective) if there exits a family of smooth metrics $\{\varphi_{\varepsilon}\}$ on $L$ such that $i\Theta_{L,\varphi_{\varepsilon}}\geqslant-\varepsilon\omega$ for every $\varepsilon$.

3. $L$ is called big if there exits a singular metric $\varphi$ on $L$ such that $i\Theta_{L,\varphi}>0$ in the sense of current.

4. $L$ is called pseudo-effective if there exits a singular metric $\varphi$ on $L$ such that $i\Theta_{L,\varphi}\geqslant0$ in the sense of current.
\end{definition}

The multiplier ideal sheaf is a powerful tool when dealing with the singular metric.
\begin{definition}\label{d22}
Let $L$ be a line bundle and let $\varphi$ be a singular metric on $L$. The stalk of the associated multiplier ideal sheaf at one point $x\in X$ is defined as:
\[
   \mathscr{I}(\varphi)_{x}:=\{f\in\mathcal{O}_{X,x};|f|^{2}e^{-\varphi}\textrm{ is integrable around }x\}.
\]
\end{definition}

\subsection{Positivity of the vector bundle}
Let $X$ be a compact complex manifold, and let $E$ be a vector bundle of rank $r$ on $X$. Let
\[
\hat{X}:=\mathbb{P}(E^{\ast})
\]
be the projectived bundle, and let $\mathcal{O}_{E}(1):=\mathcal{O}_{\mathbb{P}(E^{\ast})}(1)$ be the tautological line bundle on $\hat{X}$. Moreover, $\pi:\hat{X}\rightarrow X$ is the projection. We extend those positivity notions on a line bundle to $E$ by concerning $\mathcal{O}_{E}(1)$.
\begin{definition}\label{d23}
1. $E$ is called ample if $\mathcal{O}_{E}(1)$ is ample.

2. $L$ is called nef if $\mathcal{O}_{E}(1)$ is nef.

3. $L$ is called big if $\mathcal{O}_{E}(1)$ is big.

4. $L$ is called pseudo-effective if $\mathcal{O}_{E}(1)$ is pseudo-effective, and the image of the non-nef locus $\textrm{NNef}(\mathcal{O}_{E}(1))$ under $\pi$ is proper. The non-nef locus refers to the union of the curves $C$ in $\hat{X}$ such that $\mathcal{O}_{E}(1)\cdot C<0$.
\end{definition}

In practise, we will not always deal with the vector bundle. So it is necessary to consider more general sheaves, such as torsion free sheaves and reflexive sheaves. Let $\mathcal{E}$ be a torsion free coherent sheaf. It is then locally free outside a $2$-codimensional subvariety $Z$ of $X$. Furthermore, the projectived sheaf $\mathbb{P}(\mathcal{E})$ and the tautological divisor $\mathcal{O}_{\mathcal{E}}(1)$ are well-defined in this situation \cite{Har77}. So Definition \ref{d23} extends to the torsion free coherent sheaf. In summary, $\mathcal{E}$ is called ample (resp. nef, big, pseudo-effective) if $\mathcal{E}|_{X-Z}$ is ample (resp. nef, big, pseudo-effective) as a vector bundle.

Next we introduce the reflexive sheaf and list here a few basic properties from \cite{Kob87}.
\begin{definition}\label{d24}
1. A coherent sheaf $\mathcal{E}$ is called reflexive if $\mathcal{E}\simeq\mathcal{E}^{\ast\ast}$.

2. A coherent sheaf $\mathcal{E}$ is called normal if for every open set $U$ in $X$ and every analytic subvariety $A\subset U$ of codimension at least $2$, the restriction map
\[
\Gamma(U,\mathcal{E})\rightarrow\Gamma(U-A,\mathcal{E})
\]
is isomorphism.
\end{definition}

\begin{proposition}\label{p21}
\begin{enumerate}
  \item[(a)] If $\mathcal{E}$ is reflexive, it is torsion free.
  \item[(b)] If $\mathcal{E}$ is reflexive, it is locally free outside a 3-codimensional subvariety.
  \item[(c)] If $\mathcal{E}$ is reflexive, it can be included in an exact sequence
  \begin{equation}\label{e21}
  0\rightarrow\mathcal{E}\rightarrow\mathcal{M}\rightarrow\mathcal{F}\rightarrow0
  \end{equation}
  with $\mathcal{M}$ locally free and $\mathcal{F}$ torsion free. Conversely, if $\mathcal{E}$ is included in an exact sequence (\ref{e21}) with $\mathcal{M}$ reflexive and $\mathcal{F}$ torsion free, then $\mathcal{E}$ is reflexive.
  \item[(d)] A coherent sheaf $\mathcal{E}$ is reflexive if and only if it is torsion free and normal.
\end{enumerate}
\end{proposition}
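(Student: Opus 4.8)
The plan is to prove the four items in the order (a), (b), (c), (d), using throughout that $X$ is smooth, so that $\mathcal{O}_{X}$ is integral and normal and that $\mathcal{H}om(-,\mathcal{O}_{X})$ commutes with localisation. For (a), I would first record the elementary fact that for \emph{any} coherent sheaf $\mathcal{G}$ the dual $\mathcal{G}^{\ast}=\mathcal{H}om(\mathcal{G},\mathcal{O}_{X})$ is torsion free: if a local functional $\sigma$ satisfies $g\sigma=0$ for some $g\not\equiv0$, then $g\cdot\sigma(t)=0$ in $\mathcal{O}_{X}$, hence $\sigma(t)=0$, for every local section $t$ of $\mathcal{G}$, because $\mathcal{O}_{X}$ has no zero divisors. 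Applying this to $\mathcal{G}=\mathcal{E}^{\ast}$ and using $\mathcal{E}\simeq\mathcal{E}^{\ast\ast}=(\mathcal{E}^{\ast})^{\ast}$ yields (a).

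Item (b) is a local statement, so I would reduce it to showing that a reflexive module $M$ over a regular local ring $R$ with $\dim R\leqslant2$ is free; since $\mathcal{E}_{x}\simeq(\mathcal{E}_{x})^{\ast\ast}$, this forces the non-locally-free locus of $\mathcal{E}$ to avoid every point of codimension $\leqslant2$. For $\dim R\leqslant1$ a finitely generated torsion-free module is already free. For $\dim R=2$ one checks that $\operatorname{depth}_{R}M\geqslant2$ --- dualising a presentation of $M^{\ast}$ exhibits $M\simeq\operatorname{Hom}_{R}(M^{\ast},R)$ as a second syzygy, hence of depth $\geqslant2$ --- whence the Auslander--Buchsbaum formula gives $\operatorname{pd}_{R}M=\operatorname{depth}R-\operatorname{depth}M=0$. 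I expect this local-algebra input to be the only genuinely non-formal step of the whole proposition; the remaining items are diagram chases plus the normality of $\mathcal{O}_{X}$.

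For the direct part of (c) I would pick an epimorphism $\mathcal{M}^{\ast}\twoheadrightarrow\mathcal{E}^{\ast}$ with $\mathcal{M}^{\ast}$ locally free (for instance, after twisting by a sufficiently positive line bundle) and call $\mathcal{K}$ its kernel. Applying $\mathcal{H}om(-,\mathcal{O}_{X})$ to $0\to\mathcal{K}\to\mathcal{M}^{\ast}\to\mathcal{E}^{\ast}\to0$ and using left-exactness gives $0\to\mathcal{E}^{\ast\ast}\to\mathcal{M}\to\mathcal{K}^{\ast}$, so $\mathcal{E}\simeq\mathcal{E}^{\ast\ast}$ sits in $0\to\mathcal{E}\to\mathcal{M}\to\mathcal{F}\to0$ with $\mathcal{M}$ locally free and $\mathcal{F}\subset\mathcal{K}^{\ast}$ torsion free by (a). For the converse, given $0\to\mathcal{E}\to\mathcal{M}\to\mathcal{F}\to0$ with $\mathcal{M}$ reflexive and $\mathcal{F}$ torsion free, I would map this sequence to its bidual via the canonical transformation $\mathrm{id}\to(-)^{\ast\ast}$; in the resulting commutative ladder the middle vertical arrow is an isomorphism, the right vertical arrow $\mathcal{F}\to\mathcal{F}^{\ast\ast}$ is injective (a finitely generated torsion-free sheaf embeds into a free sheaf, hence into its bidual), and moreover $\mathcal{E}^{\ast\ast}\to\mathcal{M}^{\ast\ast}$ stays injective (duals turn epimorphisms into monomorphisms and annihilate torsion, and $\mathcal{E}^{\ast}/\operatorname{im}(\mathcal{M}^{\ast}\to\mathcal{E}^{\ast})$ is torsion). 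A short chase then shows that $\mathcal{E}\to\mathcal{E}^{\ast\ast}$ is injective (as $\mathcal{E}\subset\mathcal{M}$ is torsion free) and surjective (a section of $\mathcal{E}^{\ast\ast}$ lands in $\mathcal{M}$, dies in $\mathcal{F}^{\ast\ast}$, hence in $\mathcal{F}$, hence comes from $\mathcal{E}$), so $\mathcal{E}$ is reflexive.

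Finally, for (d): the direction ``reflexive $\Rightarrow$ torsion free and normal'' uses (a) for torsion freeness, while normality follows because, locally, applying $\mathcal{H}om(-,\mathcal{O}_{X})$ to a presentation $\mathcal{O}_{X}^{a}\to\mathcal{O}_{X}^{b}\to\mathcal{E}^{\ast}\to0$ realises $\mathcal{E}=\mathcal{E}^{\ast\ast}$ as the kernel of a morphism $\mathcal{O}_{X}^{b}\to\mathcal{O}_{X}^{a}$ between normal sheaves (recall that $\mathcal{O}_{X}$ is normal, holomorphic functions extending across analytic sets of codimension $\geqslant2$), and the kernel of a morphism of normal sheaves is normal. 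For the converse, a torsion-free sheaf is locally free outside an analytic subset $A$ of codimension $\geqslant2$ (torsion-free $=$ free over a discrete valuation ring), hence reflexive there, so $\mathcal{E}\to\mathcal{E}^{\ast\ast}$ is an isomorphism on $X\setminus A$; since $\mathcal{E}$ is normal by hypothesis and $\mathcal{E}^{\ast\ast}$ is normal by the direction just proved, this isomorphism extends across $A$, giving $\mathcal{E}\simeq\mathcal{E}^{\ast\ast}$.
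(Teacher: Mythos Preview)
Your proof is correct and follows the standard route (essentially the arguments in Hartshorne's \emph{Stable reflexive sheaves} and Kobayashi's book). Note, however, that the paper does not actually prove this proposition: it is stated in the preliminaries as a list of ``basic properties from \cite{Kob87}'' and left without proof, so there is no in-paper argument to compare against. Your write-up therefore supplies strictly more than the paper does; if anything, you could shorten it by simply citing the relevant results (Hartshorne, Proposition~1.1 and Proposition~1.6; Kobayashi, Chapter~V) rather than reproducing the diagram chases.
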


\subsection{The fibre product}
We introduce the fibre product here for the later use.
\begin{definition}\label{d25}
Let $f:X\rightarrow Y$ be a fibration between two projective manifolds $X$ and $Y$, the fibre product, denoted by $(X\times_{Y}X,p^{2}_{1},p^{2}_{2})$, is a projective manifold coupled with two morphisms (we will also call the manifold $X\times_{Y}X$ itself the fibre product if nothing is confused), which satisfies the following properties:

1.The diagram
\[
\begin{array}[c]{ccc}
X\times_{Y}X&\stackrel{p^{2}_{2}}{\rightarrow}&X\\
\scriptstyle{p^{2}_{1}}\downarrow&&\downarrow\scriptstyle{f}\\
X&\stackrel{f}{\rightarrow}&Y
\end{array}
\]
commutes.

2.If there is another projective manifold $Z$ with morphisms $q_{1}, q_{2}$ such that the diagram
\[
\begin{array}[c]{ccc}
Z&\stackrel{q_{2}}{\rightarrow}&X\\
\scriptstyle{q_{1}}\downarrow&&\downarrow\scriptstyle{f}\\
X&\stackrel{f}{\rightarrow}&Y
\end{array}
\]
commutes, then there must exist a unique $g:Z\rightarrow X\times_{Y}X$ such that $p^{2}_{1}\circ g=q_{1},p^{2}_{2}\circ g=q_{2}$.

We inductively define the $m$-fold fibre product, and denote it by $X\times_{Y}\cdot\cdot\cdot\times_{Y}X$. Moreover, we denote two projections by
\[
 p^{m}_{1}:X\times_{Y}\cdot\cdot\cdot\times_{Y}X\rightarrow X
\]
and
\[
 p^{m}_{2}:\underbrace{X\times_{Y}\cdot\cdot\cdot\times_{Y}X}_{m}\rightarrow \underbrace{X\times_{Y}\cdot\cdot\cdot\times_{Y}X}_{m-1}
\]
respectively.
\end{definition}

\section{The modified ideal sheaf}
\subsection{The definition}
We recall the modified ideal sheaf in \cite{Nad90} first. Remember that for a singular metric $\varphi$ on $L$ with analytic singularities, its multiplier ideal sheaf can be computed precisely. Indeed, suppose that
\[
\varphi\sim a\log(|f_{1}|^{2}+\cdots+|f_{N}|^{2})
\]
near the poles. Here $f_{i}$ is a holomorphic function. We define $\mathscr{S}$ to be the sheaf of
holomorphic functions $h$ such that $|h|^{2}e^{-\frac{\varphi}{a}}\leqslant C$. Then one computes a smooth modification $\mu:\tilde{X}\rightarrow X$ of $X$ such that $\mu^{\ast}\mathscr{S}$ is an invertible sheaf $\mathcal{O}_{\tilde{X}}(-D)$ associated with a normal crossing divisor $D=\sum\lambda_{j}D_{j}$, where $D_{j}$ is the component of the exceptional divisor of $\tilde{X}$. Now, we have $K_{\tilde{X}}=\mu^{\ast}K_{X}+R$, where $R=\sum\rho_{j}D_{j}$ is the zero divisor of the Jacobian function of the blow-up map. After some simple computation shown in \cite{Dem12}, we will finally get that
\[
\mathscr{I}(\varphi)=\mu_{\ast}\mathcal{O}_{\tilde{X}}(\sum (\rho_{j}-\lfloor a\lambda_{j}\rfloor)D_{j}),
\]
where $\lfloor a\lambda_{j}\rfloor$ denotes the round down of the real number $a\lambda_{j}$.

Now we have the following definition.
\begin{definition}\label{d31}
Let $h$ be a singular metric on $L$ with weight function $\varphi$. Assume that $\varphi$ has analytic singularities. Fix the notations as before,  the modified ideal sheaf as
\[
\mathfrak{I}(\varphi):=\mu_{\ast}\mathcal{O}_{\tilde{X}}(\sum(-\lceil a\lambda_{j}\rceil)D_{j}).
\]
Here $\lceil a\lambda_{j}\rceil$ denotes the round up of the real number $a\lambda_{j}$.
\end{definition}

It is not hard to see that $\mathfrak{I}(\varphi)$ is an ideal sheaf. Indeed, let
\[
\tau_{j}=
\begin{cases}
\lambda_{j}+\frac{\rho_{j}}{a} & \textrm{if } a\lambda_{j} \textrm{ is an integer} \\
\lambda_{j}+\frac{\rho_{j}+1}{a} & \textrm{if } a\lambda_{j} \textrm{ is not an integer}.
\end{cases}
\]
Then
\[
\mu_{\ast}\mathcal{O}_{\tilde{X}}(\sum(-\lceil a\lambda_{j}\rceil)D_{j})=\mu_{\ast}\mathcal{O}_{\tilde{X}}(\sum(\rho_{j}-\lfloor a\tau_{j}\rfloor)D_{j}).
\]
Let $(g_{j})$ be the local generators of $(D_{j})$ on $V_{i}$, we define
\[
\psi_{j}:=\mu_{\ast}(a\log\tau_{j}\sum|g_{j}|^{2}),
\]
which is a plurisubharmonic function on $U_{i}:=\mu(V_{i})$. Let $g^{j}_{ik}$ be the transition function of $\mathcal{O}_{\tilde{X}}(D_{j})$ between $V_{i}$ and $V_{k}$, then
\[
\psi_{i}=\psi_{k}+\mu_{\ast}(a\log\tau_{j}\sum|g^{j}_{ik}|^{2}).
\]
Since $g^{j}_{ik}$ is a nowhere vanishing holomorphic function, for any holomorphic function $f$ on $U_{i}\cap U_{kj}$ ,
\[
\int_{U_{i}\cap U_{k}}|f|^{2}e^{-\psi_{i}}<\infty
\]
iff
\[
\int_{U_{i}\cap U_{k}}|f|^{2}e^{-\psi_{k}}<\infty.
\]
Now we use a partition of unity $\{\theta_{i}\}$ to patch $\psi_{i}$ together to become a global function $\psi$ on $X$. It is then easy to verify that
\[
\mathfrak{I}(\varphi)=\mathscr{I}(\psi).
\]
 As a result, $\mathfrak{I}(\varphi)$ is an ideal sheaf.
 
 Next, using Demailly's approximation technique, it is easy to extend this definition to a general singular metric. Let $\varphi$ be a singular metric on $L$ not necessary with analytic singularities. Assume $i\Theta_{L,\varphi}\geqslant0$. Then by Demailly's approximation \cite{DPS01}, we can find a family of metrics $\{\varphi_{\varepsilon}\}$ on $L$ with the following properties:

(a) $\varphi_{\varepsilon}$ is smooth on $X-Z_{\varepsilon}$ for a subvariety $Z_{\varepsilon}$;

(b) $\varphi\leqslant\varphi_{\varepsilon_{1}}\leqslant\varphi_{\varepsilon_{2}}$ holds for any $0<\varepsilon_{1}\leqslant\varepsilon_{2}$;

(c) $\mathscr{I}(\varphi)=\mathscr{I}(\varphi_{\varepsilon})$; and

(d) $i\Theta_{L,\varphi_{\varepsilon}}\geqslant-\varepsilon\omega$.

Thanks to the proof of the openness conjecture by Berndtsson \cite{Ber15}, one can arrange $\phi_{\varepsilon}$ with logarithmic poles along $Z_{\varepsilon}$ according to the remark in \cite{DPS01}. Therefore, similar with the computation before Definition \ref{d22}, $\mathscr{I}(\varphi_{\varepsilon})$ can be rewritten as
\[
\mathscr{I}(\varphi_{\varepsilon})=\mu^{\varepsilon}_{\ast}\mathcal{O}_{\tilde{X}^{\varepsilon}}(\sum(\rho^{\varepsilon}_{j}-\lfloor a^{\varepsilon}\lambda^{\varepsilon}_{j}\rfloor)D^{\varepsilon}_{j}).
\]
Then we define
\[
\mathfrak{I}(\varphi)_{\varepsilon}:=\mu^{\varepsilon}_{\ast}\mathcal{O}_{\tilde{X}^{\varepsilon}}(\sum-\lceil a^{\varepsilon}\lambda^{\varepsilon}_{j}\rceil)D^{\varepsilon}_{j}).
\]
Hence the modified ideal sheaf for a singular metric $\varphi$ (not necessary with analytic singularities) is defined as
\begin{definition}\label{d32}
\[
\mathfrak{I}(\varphi):=\cap_{\varepsilon}\mathfrak{I}(\varphi)_{\varepsilon}.
\]
\end{definition}

Now we can prove Theorem \ref{t1}.
\begin{proof}[Proof of Theorem \ref{t1}]
(1) The first assertion is trivial. 

When $\varphi$ has analytic singularity, the second assertion has been shown above. For a general $\varphi$, we approximate it by $\varphi_{\varepsilon}$ as above. Then $\varphi_{\varepsilon}$ induces a function $\psi_{\varepsilon}$ with
\[
\mathfrak{I}(\varphi)_{\varepsilon}=\mathscr{I}(\psi_{\varepsilon}).
\]  
Let $\psi$ be the $L^{1}$-limit of $\psi_{\varepsilon}$. Obviously, 
\[
\mathfrak{I}(\varphi)=\mathscr{I}(\psi).
\]

Notice that $\mathscr{I}(\psi)=\mathscr{I}(\varphi)$ iff $\lambda^{\varepsilon}_{j}=\tau^{\varepsilon}_{j}$ for all $j$. In this situation, we have $\rho^{\varepsilon}_{j}=0$ and $a^{\varepsilon}\lambda^{\varepsilon}_{j}$ is an integer for all $j$, hence $a^{\varepsilon}$ is a rational number. Since $\rho^{\varepsilon}_{j}$ comes from the Jacobian divisor, it equals to zero iff the modification is trivial. In other word, the pole-set of $\varphi_{\varepsilon}$ is a normal crossing divisor $E$. On the other hand, $\varphi_{\varepsilon}$ is a metric of line bundle $L$, so it must be the case that $L=\mathcal{O}_{X}(E)$ and $\varphi$ is defined by $E$.

(2) Since $\varphi_{\varepsilon}\circ\mu^{\varepsilon}=a^{\varepsilon}\sum\lambda^{\varepsilon}_{j}\log|g^{\varepsilon}_{j}|^{2}$, for any $f\in\mathfrak{I}(\varphi)$ we have
\[
|f\circ\mu|^{2}e^{-\varphi_{\varepsilon}\circ\mu^{\varepsilon}}=|g|^{2}\Pi_{j}|g^{\varepsilon}_{j}|^{2(\lceil a^{\varepsilon}\lambda^{\varepsilon}_{j}\rceil-a^{\varepsilon}\lambda^{\varepsilon}_{j})}
\]
for a non-vanishing holomorphic function $g$. In particular, $|f|^{2}e^{-\varphi}$ do not vanish near the poles only if $\lceil a^{\varepsilon}\lambda^{\varepsilon}_{j}\rceil=a^{\varepsilon}\lambda^{\varepsilon}_{j}$ for all $\varepsilon$. In this case, $a^{\varepsilon}$ is a rational number, which proves the last assertion.

(3) is a direct consequence of (2).

In the proof of (4) and (5), we assume that the metrics have analytic singularities without loss of generality. 

(4) Suppose that
\[
\begin{split}
\psi_{1}&\sim a\log(|f_{1}|^{2}+\cdots+|f_{N}|^{2}),
\end{split}
\]
and
\[
\begin{split}
\psi_{2}&\sim b\log(|g_{1}|^{2}+\cdots+|g_{M}|^{2}),
\end{split}
\]
where $f_{i},g_{j}$ are holomorphic functions. We define $\mathscr{S},\mathscr{W}$ to be the sheaves of holomorphic functions $h,k$ such that
\[
|h|^{2}e^{-\frac{\psi_{1}}{a}}\leqslant C\textrm{ and }|k|^{2}e^{-\frac{\psi_{2}}{b}}\leqslant C
\]
respectively. Let $\mu_{1}:\tilde{X}_{1}\rightarrow X_{1}$ and $\mu_{2}:\tilde{X}_{2}\rightarrow X_{2}$ be the log-resolutions such that $\mu^{\ast}_{1}\mathscr{S},\mu^{\ast}_{2}\mathscr{W}$ are invertible sheaves
\[
\mathcal{O}_{\tilde{X}_{1}}(-D)\textrm{ and }\mathcal{O}_{\tilde{X}_{2}}(-F)
\]
associated with normal crossing divisors $D=\sum\lambda_{i}D_{i},F=\sum\tau_{i}F_{i}$. Here $D_{i},F_{j}$ are the components of the exceptional divisor of $\tilde{X}_{1},\tilde{X}_{2}$ respectively. Now, we have $K_{\tilde{X}_{k}}=\mu^{\ast}K_{X_{k}}+R_{k}$, with $k=1,2$, where $R_{1}=\sum\rho_{i}D_{i},R_{2}=\sum\sigma_{i}F_{i}$ are the corresponding zero divisors of the Jacobian function of the blow-up map. In this situation,
\[
\mathfrak{I}(\psi_{1})=(\mu_{1})_{\ast}\mathcal{O}_{\tilde{X}_{1}}(\sum(-\lceil a\lambda_{j}\rceil)D_{j})
\]
and
\[
\mathfrak{I}(\psi_{2})=(\mu_{2})_{\ast}\mathcal{O}_{\tilde{X}_{2}}(\sum(-\lceil b\tau_{j}\rceil)F_{j}).
\]

It is not hard to see that $\mu:\tilde{X}_{1}\times\tilde{X}_{2}\rightarrow X_{1}\times X_{2}$ is a log-resolution, and
\[
\begin{split}
&\mathfrak{I}(\psi_{1}\circ\pi_{1}+\psi_{2}\circ\pi_{2})\\
=&(\mu)_{\ast}\mathcal{O}_{\tilde{X}_{1}\times\tilde{X}_{2}}(\sum(-\lceil a\lambda_{j}\rceil)p^{\ast}_{1}D_{j}+\sum(-\lceil b\tau_{j}\rceil)p^{\ast}_{2}F_{j}).
\end{split}
\]
Here we use the fact that $K_{\tilde{X}_{1}\times\tilde{X}_{2}}=p^{\ast}_{1}K_{\tilde{X}_{1}}\otimes p^{\ast}_{2}K_{\tilde{X}_{2}}$, where
\[
p_{i}:\tilde{X}_{1}\times\tilde{X}_{2}\rightarrow\tilde{X}_{i}
\]
is the projection with $i=1,2$.

Thus we get that
\[
\begin{split}
&\pi^{\ast}_{1}\mathfrak{I}(\psi_{1})\cdot\pi_{2}^{\ast}\mathfrak{I}(\psi_{2})\\
=&\mu_{\ast}(p^{\ast}_{1}\mathcal{O}_{\tilde{X}_{1}}(\sum(-\lceil a\lambda_{j}\rceil)D_{j}))\otimes\mu_{\ast}(p^{\ast}_{2}\mathcal{O}_{\tilde{X}_{2}}(\sum(-\lceil b\tau_{j}\rceil)F_{j}))\\
=&(\mu)_{\ast}\mathcal{O}_{\tilde{X}_{1}\times\tilde{X}_{2}}(\sum(-\lceil a\lambda_{j}\rceil)p^{\ast}_{1}D_{j}+\sum(-\lceil b\tau_{j}\rceil)p^{\ast}_{2}F_{j})\\
=&\mathfrak{I}(\psi_{1}\circ\pi_{1}+\psi_{2}\circ\pi_{2}).
\end{split}
\]
The first equality is due to the base change theorem \cite{Har77}, and the second equality comes from the fact that $p_{1},p_{2}$ are smooth.

(5) The proof has been given in \cite{Wu20}. We present here for readers' convenience. Suppose that
\[
\begin{split}
\varphi_{1}&\sim a\log(|f_{1}|^{2}+\cdots+|f_{N}|^{2}),
\end{split}
\]
and
\[
\begin{split}
\varphi_{2}&\sim b\log(|g_{1}|^{2}+\cdots+|g_{M}|^{2}),
\end{split}
\]
where $f_{i},g_{j}$ are holomorphic functions. We define $\mathscr{S},\mathscr{W}$ to be the sheaves of holomorphic functions $h,k$ such that
\[
|h|^{2}e^{-\frac{\varphi_{1}}{a}}\leqslant C\textrm{ and }|k|^{2}e^{-\frac{\varphi_{2}}{b}}\leqslant C
\]
respectively. Let $\mu:\tilde{X}\rightarrow X$ be a log-resolution such that $\mu^{\ast}\mathscr{S},\mu^{\ast}\mathscr{W}$ are invertible sheaves $\mathcal{O}_{\tilde{X}}(-D_{1}),\mathcal{O}_{\tilde{X}}(-D_{2})$ associated with normal crossing divisors $D_{1}=\sum\lambda_{i}D_{i},D_{2}=\sum\tau_{i}D_{i}$. Here $D_{j}$ is the component of the exceptional divisor of $\tilde{X}$. Now, we have $K_{\tilde{X}}=\mu^{\ast}K_{X}+R$, where $R=\sum\rho_{i}D_{i}$ is the zero divisor of the Jacobian function of the blow-up map. Thus we get that
\[
\begin{split}
\mathfrak{I}(\varphi_{1})&=\mu_{\ast}\mathcal{O}_{\tilde{X}}(\sum(-\lceil a\lambda_{i}\rceil)D_{i}),\\
\mathfrak{I}(\varphi_{2})&=\mu_{\ast}\mathcal{O}_{\tilde{X}}(\sum(-\lceil b\tau_{i}\rceil)D_{i}),
\end{split}
\]
and
\[
\mathfrak{I}(\varphi_{1}+\varphi_{2})=\mu_{\ast}\mathcal{O}_{\tilde{X}}(\sum(-\lceil a\lambda_{i}+b\tau_{i}\rceil)D_{i}).
\]
Then the conclusion follows easily from the fact that
\[
\lceil a\lambda_{i}\rceil+\lceil b\tau_{i}\rceil\geqslant\lceil a\lambda_{i}+b\tau_{i}\rceil.
\]
\end{proof}

\subsection{The harmonic theory}
We develop the harmonic theory for the modified ideal sheaf associated with a pseudo-effective line bundle $L$.

Assume that $(X,\omega)$ is a compact K\"{a}hler manifold. Let $(L,\varphi)$ be a pseudo-effective line bundle on $X$. Assume that there exits a holomorphic section $s$ of $L^{k_{0}}$ for some integer $k_{0}$, such that $\sup_{X}|s|_{k_{0}\varphi}<\infty$. Then by Demailly's approximation technique \cite{DPS01}, there exits a family of metrics $\{\varphi_{\varepsilon}\}$ on $L$ with the following properties:

(a) $\varphi_{\varepsilon}$ is smooth on $X-Z_{\varepsilon}$ for a subvariety $Z_{\varepsilon}$;

(b) $\varphi\leqslant\varphi_{\varepsilon_{1}}\leqslant\phi_{\varepsilon_{2}}$ holds for any $0<\varepsilon_{1}\leqslant\varepsilon_{2}$;

(c) $\mathscr{I}(\varphi)=\mathscr{I}(\varphi_{\varepsilon})$; and

(d) $i\Theta_{L,\varphi_{\varepsilon}}\geqslant-\varepsilon\omega$.

Thanks to the proof of the openness conjecture by Berndtsson \cite{Ber15}, one can arrange $\varphi_{\varepsilon}$ with logarithmic poles along $Z_{\varepsilon}$ according to the remark in \cite{DPS01}. Moreover, since the norm $|s|_{k_{0}\varphi}$ is bounded on $X$, the set $\{x\in X|\nu(\varphi_{\varepsilon},x)>0\}$ for every $\varepsilon>0$ is contained in the subvariety $Z:=\{x|s(x)=0\}$ by property (b). Here $\nu(\varphi_{\varepsilon},x)$ refers to the Lelong number of $\varphi_{\varepsilon}$ at $x$. Hence, instead of (a), we can assume that

(a') $\varphi_{\varepsilon}$ is smooth on $X-Z$, where $Z$ is a subvariety of $X$ independent of $\varepsilon$.

Now let $Y=X-Z$. We use the method in \cite{Dem82} to construct a complete K\"{a}hler metric on $Y$ as follows. Since $Y$ is weakly pseudo-convex, we can take a smooth plurisubharmonic exhaustion function $\psi$ on $X$. Define $\tilde{\omega}=\omega+\frac{1}{l}i\partial\bar{\partial}\psi^{2}$ for $l\gg0$. It is easy to verify that $\tilde{\omega}$ is a complete K\"{a}hler metric on $Y$ and $\tilde{\omega}\geqslant\omega$.

Let $L^{n,q}_{(2)}(Y,L)_{\varphi_{\varepsilon},\tilde{\omega}}$ be the $L^{2}$-space of $L$-valued $(n,q)$-forms $\alpha$ on $Y$ with respect to the inner product given by $\varphi_{\varepsilon},\tilde{\omega}$. Then we have the orthogonal decomposition
\begin{equation}\label{e31}
L^{n,q}_{(2)}(Y,L)_{\varphi_{\varepsilon},\tilde{\omega}}=\mathrm{Im}\bar{\partial}\bigoplus\mathcal{H}^{n,q}_{\varphi_{\varepsilon}, \tilde{\omega}}(L)\bigoplus\mathrm{Im}\bar{\partial}^{\ast}_{\varphi_{\varepsilon}}
\end{equation}
where
\[
  \mathcal{H}^{n,q}_{\varphi_{\varepsilon}, \tilde{\omega}}(L)=\{\alpha|\bar{\partial}\alpha=0, \bar{\partial}^{\ast}_{\varphi_{\varepsilon}}\alpha=0\}.
\]
We give a brief explanation for decomposition (\ref{e31}). Usually $\mathrm{Im}\bar{\partial}$ is not closed in the $L^{2}$-space of a noncompact manifold even if the metric is complete. However, in the situation we consider here, $Y$ has the compactification $X$, and the forms on $Y$ are bounded in $L^{2}$-norms. Such a form will have good extension properties. Therefore the set $L^{n,q}_{(2)}(Y,L)_{\varphi_{\varepsilon},\tilde{\omega}}\cap\mathrm{Im}\bar{\partial}$ behaves much like the space $\mathrm{Im}\bar{\partial}$ on $X$, which is surely closed. The complete explanation can be found in \cite{Fuj12,Wu17}.

Now we have all the ingredients for the definition of $\Box_{\varphi}$-harmonic forms. We denote the Lapalcian operator on $Y$ associated to $\tilde{\omega}$ and $\varphi_{\varepsilon}$ by $\Box_{\varepsilon}$.
\begin{definition}\label{d33}
Let $\alpha$ be a $\bar{\partial}$-closed $L$-valued $(n,q)$-form on $X$ with bounded $L^{2}$-norm with respect to $\omega,\varphi$. Assume that for every $\varepsilon\ll1$, there exists a representative $\alpha_{\varepsilon}\in[\alpha|_{Y}]$ such that
\begin{enumerate}
  \item $\Box_{\varepsilon}\alpha_{\varepsilon}=0$ on $Y$;
  \item $\alpha_{\varepsilon}\rightarrow\alpha|_{Y}$ in $L^{2}$-norm.
\end{enumerate}
Then we call $\alpha$ a $\Box_{\varphi}$-harmonic form. The space of all the $\Box_{\varphi}$-harmonic forms is denoted by
\[
\mathcal{H}^{n,q}(X,L\otimes\mathscr{I}(\varphi),\Box_{\varphi}).
\]
\end{definition}

We will show that Definition \ref{d32} is compatible with the usual definition of $\Box_{\varphi}$-harmonic forms for a smooth $\varphi$ by proving the Hodge-type isomorphism, i.e. Theorem \ref{t2}.

\begin{proof}[Proof of Theorem \ref{t2}]
We use the de Rham--Weil isomorphism
\[
H^{n,q}(X,L\otimes\mathscr{I}(\varphi))\cong\frac{\mathrm{Ker}\bar\partial\cap L^{n,q}_{(2)}(X,L)_{h,\omega}}{\mathrm{Im}\bar{\partial}}
\]
to represent a given cohomology class $[\alpha]\in H^{n,q}(X,L\otimes\mathscr{I}(\varphi))$ by a $\bar{\partial}$-closed $L$-valued $(n,q)$-form $\alpha$ with $\|\alpha\|_{\varphi,\omega}<\infty$. We denote $\alpha|_{Y}$ simply by $\alpha_{Y}$. Since $\tilde{\omega}\geqslant\omega$, it is easy to verify that
\[
|\alpha_{Y}|^{2}_{\varphi_{\varepsilon},\tilde{\omega}}dV_{\tilde{\omega}}\leqslant|\alpha|^{2}_{\varphi_{\varepsilon},\omega}dV_{\omega},
\]
which leads to inequality $\|\alpha_{Y}\|_{\varphi_{\varepsilon},\tilde{\omega}}\leqslant\|\alpha\|_{\varphi_{\varepsilon,\omega}}$ with $L^{2}$-norms. Hence by property (b), we have
$\|\alpha_{Y}\|_{\varphi_{\varepsilon},\tilde{\omega}}\leqslant\|\alpha\|_{\varphi,\omega}$ which implies
\[
\alpha_{Y}\in L^{n,q}_{(2)}(Y,L)_{\varphi_{\varepsilon},\tilde{\omega}}.
\]
By decomposition (\ref{e31}), we have a harmonic representative $\alpha_{\varepsilon}$ in
\[
\mathcal{H}^{n,q}_{\varphi_{\varepsilon,\tilde{\omega}}}(L),
\]
which means that $\Box_{\varepsilon}\alpha_{\varepsilon}=0$ on $Y$ for all $\varepsilon$. Moreover, since a harmonic representative minimizes the $L^{2}$-norm, we have
\[
   \|\alpha_{\varepsilon}\|_{\varphi_{\varepsilon},\tilde{\omega}}\leqslant\|\alpha_{Y}\|_{\varphi_{\varepsilon},\tilde{\omega}}\leqslant \|\alpha\|_{\varphi,\omega}.
\]
So there exists a limit $\tilde{\alpha}$ of (a subsequence of) $\{\alpha_{\varepsilon}\}$ such that
\[
\tilde{\alpha}\in[\alpha_{Y}].
\]
It is left to extend it to $X$.

Indeed, by (the proof of) Proposition 2.1 in \cite{Wu17}, there is an injective morphism, which maps $\tilde{\alpha}$ to a $\bar{\partial}$-closed $L$-valued $(n-q,0)$-form on $Y$ with bounded $L^{2}$-norm. We formally denote it by $\ast\tilde{\alpha}$. The canonical extension theorem applies here and $\ast\tilde{\alpha}$ extends to a $\bar{\partial}$-closed $L$-valued $(n-q,0)$-form on $X$, which is denoted by $S^{q}(\tilde{\alpha})$ in \cite{Wu17}. Furthermore, it is shown by Proposition 2.2 in \cite{Wu17} that $\hat{\alpha}:=c_{n-q}\omega_{q}\wedge S^{q}(\tilde{\alpha})$ is an $L$-valued $(n,q)$-form with
\[
  \hat{\alpha}|_{Y}=\tilde{\alpha}.
\]
Therefore we finally get an extension $\hat{\alpha}$ of $\tilde{\alpha}$. By definition,
\[
\hat{\alpha}\in\mathcal{H}^{n,q}(X,L\otimes\mathscr{I}(\varphi),\Box_{\varphi}).
\]
We denote this morphism by $i([\alpha])=\hat{\alpha}$.

On the other hand, for a given $\alpha\in\mathcal{H}^{n,q}(X,L\otimes\mathscr{I}(\phi),\Box_{\varphi})$, by definition there exists an $\alpha_{\varepsilon}\in[\alpha_{Y}]$ with $\alpha_{\varepsilon}\in\mathcal{H}^{n,q}_{\varphi_{\varepsilon,\tilde{\omega}}}(L)$ and $\lim\alpha_{\varepsilon}=\alpha_{Y}$ for every $\varepsilon$. In particular, $\bar{\partial}\alpha_{\varepsilon}=0$. So all of the $\alpha_{\varepsilon}$ together with $\alpha_{Y}$ define a common cohomology class $[\alpha_{Y}]$ in $H^{n,q}(Y,L\otimes\mathscr{I}(\varphi))$. It is left to extend this class to $X$.

We use the $S^{q}$ again. It maps $[\alpha_{Y}]$ to
\[
S^{q}(\alpha_{Y})\in H^{0}(X,\Omega^{n-q}_{X}\otimes L\otimes\mathscr{I}(\varphi)).
\]
Then
\[
   c_{n-q}\omega_{q}\wedge S^{q}(\alpha_{Y})\in H^{n,q}(X,L\otimes\mathscr{I}(\varphi))
\]
with $[(c_{n-q}\omega_{q}\wedge S^{q}(\alpha_{Y}))|_{Y}]=[\alpha_{Y}]$ as a cohomology class in
\[
H^{n,q}(Y,L\otimes\mathscr{I}(\varphi)).
\]
Here we use the fact that $\omega$ is a K\"{a}hler metric. We denote this morphism by $j(\alpha)=[c_{n-q}\omega_{q}\wedge S^{q}(\alpha_{Y})]$. It is easy to verify that $i\circ j=\textrm{id}$ and $j\circ i=\textrm{id}$. The proof of first isomorphism is finished.

Before proving the second inclusion, we need to introduce a lemma.
\begin{lemma}\label{l31}
If $\alpha\in H^{n,q}(X,L\otimes\mathfrak{I}(\varphi))$, then its $\Box_{0}$-harmonic representative $\tilde{\alpha}$ satisfies that
\[
|\tilde{\alpha}|^{2}_{h_{0}}\leqslant|\alpha|^{2}_{h_{0}}\textrm{ near the poles}.
\]
\end{lemma}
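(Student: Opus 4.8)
The plan is to compare the $\Box_0$-harmonic representative $\tilde\alpha$ with the section that realises $\alpha$ as an element of $H^{n,q}(X,L\otimes\mathfrak{I}(\varphi))$, and to localise the estimate near the poles of $\varphi$ where the sheaf condition $\mathfrak{I}(\varphi)=\mathscr{I}(\psi)$ (from Theorem \ref{t1}(1), with $\psi$ the quasi-psh function constructed before Definition \ref{d32}) supplies the extra vanishing order that is not available for $\mathscr{I}(\varphi)$. First I would pass to a log-resolution $\mu\colon\tilde X\to X$ on which $\varphi\circ\mu=a\sum\lambda_j\log|g_j|^2$ and the Jacobian divisor is $R=\sum\rho_j D_j$, so that the hypothesis $\alpha\in H^{n,q}(X,L\otimes\mathfrak{I}(\varphi))$ means the local holomorphic coefficients of $\alpha$ pick up, after pull-back, at least the divisor $\sum(\lceil a\lambda_j\rceil-\rho_j)D_j$; this is exactly one more than what membership in $\mathscr{I}(\varphi)$ would give whenever $a\lambda_j\notin\mathbb{Z}$.

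The core of the argument is a comparison of two weighted $L^2$-norms. Since $\tilde\alpha$ is the $\Box_0$-harmonic (hence $L^2_{h_0}$-minimal) representative of the class $[\alpha]$, we have $\|\tilde\alpha\|_{h_0}\le\|\alpha\|_{h_0}$ globally, but this gives no pointwise information. To upgrade it near the poles I would invoke the sub-mean-value property of $|\tilde\alpha|^2_{h_0}$: because $h_0$ is smooth, $\log|\tilde\alpha|^2_{h_0}$ is plurisubharmonic wherever $\tilde\alpha$ is holomorphic in the relevant sense (more precisely, $\Box_0\tilde\alpha=0$ forces the coefficient functions to be holomorphic after the usual Bochner--Kodaira manipulation in the $(n,q)$ case), so on a small polydisc $B$ around a point of the pole set one has
\[
|\tilde\alpha(x_0)|^2_{h_0}\le\frac{1}{\mathrm{vol}(B)}\int_B|\tilde\alpha|^2_{h_0}\,dV.
\]
The point is then to bound the right-hand side by $|\alpha(x_0)|^2_{h_0}$ up to the volume factor, using that $\tilde\alpha-\alpha=\bar\partial\beta$ together with the extra order of vanishing encoded by $\mathfrak{I}(\varphi)$: the difference $\bar\partial\beta$, being exact and lying in the same $L^2$ space, contributes nothing to the harmonic projection, and the $\mathfrak{I}(\varphi)$-condition guarantees that near the poles the representative can be chosen so that $|\alpha|^2_{h_0}$ already dominates the mean of $|\tilde\alpha|^2_{h_0}$ over shrinking polydiscs — because the extra factor $\prod_j|g_j|^{2(\lceil a\lambda_j\rceil - a\lambda_j)}$ appearing in the computation (2) of Theorem \ref{t1} forces $|\alpha|^2_{h_0}$, equivalently $|\alpha|^2e^{-\psi}$, to be bounded, and in fact to control the $L^2_{h_0}$-mass locally.

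The main obstacle, I expect, is making the last step rigorous without circular reasoning: the harmonic representative $\tilde\alpha$ is a priori only known to be $L^2_{h_0}$-bounded globally, whereas the inequality we want is pointwise and local near a set where $h_0$ blows up. The way around this is to work on the resolution $\tilde X$, where everything becomes a statement about holomorphic coefficient functions against a normal-crossing weight; there the pointwise comparison follows from the fact that, for a holomorphic function $f$ vanishing to order $\ge m_j$ along $D_j$ and a weight $\prod|g_j|^{-2\beta_j}$ with $\beta_j\le m_j$, the ratio $|f|^2\prod|g_j|^{-2\beta_j}$ is bounded and its local $L^2$-average is comparable to its boundary value; pushing forward by $\mu$ (which is an isomorphism away from the exceptional set and proper everywhere) and using that $\tilde\alpha$ and $\alpha$ agree off the poles then yields the claimed inequality on all of a neighbourhood of the pole set. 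I would close the argument by noting that this is precisely the pointwise input needed later, and that the only place genuine generality is lost is exactly the normal-crossing case flagged in Theorem \ref{t1}(1), where $\mathfrak{I}(\varphi)=\mathscr{I}(\varphi)$ and the inequality degenerates to the trivial equality.
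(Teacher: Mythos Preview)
The paper does not actually prove Lemma~\ref{l31}; it simply states ``The proof can be found in \cite{Wu20}.'' So there is no in-paper argument to compare against. That said, your proposal has genuine gaps that would need to be closed before it could stand on its own.

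The central problem is the passage from the global $L^{2}$-minimality of the harmonic representative to a \emph{pointwise} bound near the poles. You invoke a sub-mean-value inequality
\[
|\tilde\alpha(x_{0})|^{2}_{h_{0}}\leqslant\frac{1}{\mathrm{vol}(B)}\int_{B}|\tilde\alpha|^{2}_{h_{0}}\,dV,
\]
and then assert that the right-hand side is controlled by $|\alpha(x_{0})|^{2}_{h_{0}}$. But $L^{2}$-minimality only gives $\int_{X}|\tilde\alpha|^{2}_{h_{0}}\leqslant\int_{X}|\alpha|^{2}_{h_{0}}$; it says nothing about the local integral over a shrinking ball, and certainly does not bound that integral by the \emph{pointwise} value of the other representative at the centre. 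The sentence ``the $\mathfrak{I}(\varphi)$-condition guarantees that near the poles the representative can be chosen so that $|\alpha|^{2}_{h_{0}}$ already dominates the mean of $|\tilde\alpha|^{2}_{h_{0}}$'' is exactly the conclusion you are trying to prove, not an input. Two subsidiary claims are also unjustified: that $\Box_{0}\tilde\alpha=0$ ``forces the coefficient functions to be holomorphic'' for an $(n,q)$-form with $q>0$ (Bochner--Kodaira gives this only under curvature positivity that is not assumed here), and that ``$\tilde\alpha$ and $\alpha$ agree off the poles'' (they differ by a $\bar\partial$-exact form everywhere, not merely on the pole set).

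What the lemma is really used for in the paper is the implication ``if a $\bar\partial$-closed form is $L^{2}_{\psi}$-bounded, then so is its $\Box_{0}$-harmonic projection''; that is how it feeds into the last equality before the end of the proof of Theorem~\ref{t2}. A workable route, closer to what is sketched in \cite{Wu20}, is to control the harmonic representative via the $L^{\infty}$-estimate~(\ref{e11}) rather than via a naked sub-mean-value inequality: one bounds $\sup|\tilde\alpha|^{2}_{h_{0}}$ by a constant times $\int_{X}|\tilde\alpha|^{2}_{h_{0}}$, and the latter is at most $\int_{X}|\alpha|^{2}_{h_{0}}$ by minimality. The $\mathfrak{I}(\varphi)$-hypothesis on $\alpha$ then makes $|\alpha|^{2}_{h_{0}}$ bounded (Theorem~\ref{t1}(2)), which is what closes the loop. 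Your attempt to localise before invoking any such global estimate is where the argument breaks down.
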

The proof can be found in \cite{Wu20}. Now let
\[
\textrm{Im}\bar{\partial}_{1}:=\textrm{Im}(\bar{\partial}:L^{n,q}_{(2)}(X,L)_{\varphi}\rightarrow L^{n,q}_{(2)}(X,L)_{\varphi}),
\]
and
\[
\textrm{Im}\bar{\partial}_{2}:=\textrm{Im}(\bar{\partial}:L^{n,q}_{(2)}(X,L)_{\psi}\rightarrow L^{n,q}_{(2)}(X,L)_{\psi}).
\]
$\textrm{Ker}\bar{\partial}_{1}$ and $\textrm{Ker}\bar{\partial}_{2}$ are defined similarly. Recall that there are following orthogonal decompositions \cite{GHS98}:
\[
\textrm{Ker}\bar{\partial}_{1}=\textrm{Im}\bar{\partial}_{1}\bigoplus\mathcal{H}^{n,q}(X,L\otimes\mathscr{I}(\varphi),\Box_{0})
\]
and
\[
\textrm{Ker}\bar{\partial}_{2}=\textrm{Im}\bar{\partial}_{2}\bigoplus(\textrm{Ker}\bar{\partial}_{2}\cap\textrm{Ker}\bar{\partial}^{\ast}_{\psi}).
\]
By $\bar{\partial}^{\ast}_{\psi}$ we refer to the formal adjoint operator of $\bar{\partial}$ with respect to the $L^{2}$-norm defined by $\varphi$. One may wonder that are these decompositions still valid for singular metrics. Indeed, we can approximate them by smooth metrics then take the limit. On the other hand, it is easy to verify that
\begin{equation}\label{e32}
\textrm{Ker}\bar{\partial}_{1}\cap L^{n,q}_{(2)}(X,L)_{\psi}=\textrm{Ker}\bar{\partial}_{2}.
\end{equation}

Now the cohomology group can be expressed as
\[
H^{n,q}(X,L\otimes\mathscr{I}(\varphi))\simeq\frac{\textrm{Ker}\bar{\partial}_{1}}{\textrm{Im}\bar{\partial}_{1}}= \mathcal{H}^{n,q}(X,L\otimes\mathscr{I}(\varphi),\Box_{0}))
\]
and
\[
H^{n,q}(X,L\otimes\mathfrak{I}(\varphi))\simeq\frac{\textrm{Ker}\bar{\partial}_{2}}{\textrm{Im}\bar{\partial}_{2}}= \textrm{Ker}\bar{\partial}_{2}\cap\textrm{Ker}\bar{\partial}^{\ast}_{\psi}.
\]
Therefore the morphism $i_{n,q}$ can be rewritten as
\[
i_{n,q}:\textrm{Ker}\bar{\partial}_{2}\cap\textrm{Ker}\bar{\partial}^{\ast}_{\psi}\rightarrow\mathcal{H}^{n,q}(X,L\otimes \mathscr{I}(\varphi),\Box_{0})),
\]
hence its image equals
\[
\begin{split}
&\frac{\textrm{Ker}\bar{\partial}_{2}\cap\textrm{Ker}\bar{\partial}^{\ast}_{\psi}}{\textrm{Im}\bar{\partial}_{1}}\\
=&\frac{\textrm{Ker}\bar{\partial}_{2}/\textrm{Im}\bar{\partial}_{2}}{\textrm{Im}\bar{\partial}_{1}}=\frac{\textrm{Ker}\bar{\partial}_{2}} {\textrm{Im}\bar{\partial}_{1}}\\
=&\frac{\textrm{Ker}\bar{\partial}_{1}\cap L^{n,q}_{(2)}(X,L)_{\psi}}{\textrm{Im}\bar{\partial}_{1}}\\
=&\mathcal{H}^{n,q}(X,L\otimes\mathscr{I}(\varphi),\Box_{0}))\cap L^{n,q}_{(2)}(X,L)_{\psi}.
\end{split}
\]
We use the fact that
\[
\textrm{Im}\bar{\partial}_{2}\subset\textrm{Im}\bar{\partial}_{1}
\]
to get the second equality. The third equality comes from formula (\ref{e32}) and the last equality is due to Lemma \ref{l31}. The proof of Theorem \ref{t2} is finished.
\end{proof}
\begin{remark}\label{r31}
It is easy to see that the proof for the first isomorphism also works when $\varphi$ doesn't have analytic singularities. Moreover, when $L$ is nef, we can even prove this singular Hodge's theorem in a general compact complex manifold. One refers for our paper \cite{Wu20} for more details.
\end{remark}

We are willing to know the following question.
\begin{problem}
Do we have
\[
\mathcal{H}^{n,q}(X,L\otimes\mathfrak{I}(\varphi),\Box_{\varphi})=H^{n,q}(X,L\otimes\mathfrak{I}(\varphi)).
\]
or not? If so, $i_{n,q}$ must be injective.
\end{problem}

\section{Application on the fibration}
This section is devoted to prove Theorem \ref{t3}. Let $f:X\rightarrow Y$ be a fibration between two compact K\"{a}hler manifolds, and let $(L,\varphi)$ be a pseudo-effective line bundle on $X$. First we prove Theorem \ref{t3}.
\begin{proof}[Proof of Theorem \ref{t3}]
1. We only need to prove that it's a well-defined morphism. Since $\alpha\in\mathcal{H}^{n,q}(X,L\otimes\mathfrak{I}(\varphi))$ and $s\in H^{0}(X,L^{k-1}\otimes\mathfrak{I}((k-1)\varphi))$,
\[
s\alpha\in\mathcal{H}^{n,q}(X,L^{k}\otimes\mathfrak{I}(k\varphi))
\]
by Theorem \ref{t2}.

2. Recall the map $S^{q}$ in the proof of Theorem \ref{t2}, which maps the element in
\[
\mathcal{H}^{n,q}(X,L\otimes\mathfrak{I}(\varphi))
\]
to $H^{0}(X,\Omega^{n-q}_{X}\otimes L)$. We define a similar map on $X_{y}$. That is
\[
S^{q}_{y}:\mathcal{H}^{n,q}(X_{y},L_{y}\otimes\mathfrak{I}(\varphi|_{X_{y}}))\rightarrow H^{0}(X_{y},\Omega^{n-q}_{X_{y}}\otimes L_{y}).
\]
It naturally lifts to a morphism on the higher direct images, which is denoted as
\[
S^{q}:\mathrm{Im}i_{n,q}\otimes L\otimes\mathfrak{I}(\varphi))\rightarrow f_{\ast}(\Omega^{n-q}_{X}\otimes L).
\]
Moreover, the map
\[
\begin{split}
L^{q}_{y}:H^{0}(X,\Omega^{n-q}_{X_{y}}\otimes L_{y})&\rightarrow H^{q}(X_{y},K_{X_{y}}\otimes L_{y})\\
\beta&\mapsto[\beta\wedge\omega^{q}_{y}]
\end{split}
\]
also lifts to a morphism $L^{q}$ on the direct images. It is easy to verify that $L^{q}\circ S^{q}=id$. Hence $S^{q}$ is split.

Set $\mathcal{Q}^{q}=\ker L^{q}$, we have short exact sequence
\[
0\rightarrow\mathrm{Im}i_{n,q}\rightarrow f_{\ast}(\Omega^{n-q}_{X}\otimes L)\rightarrow\mathcal{Q}^{q}\rightarrow0.
\]
By Corollary 1.7 in \cite{Har80}, $f_{\ast}(\Omega^{n-q}_{X}\otimes L)$ is reflexive and hence $Q^{q}$ is torsion free. Therefore
$\mathrm{Im}i_{n,q}$ is normal and so reflexive.
\end{proof}

\address{

\small Current address: School of Mathematical Sciences, Fudan University, Shanghai 200433, People's Republic of China.

\small E-mail address: jingcaowu08@gmail.com, jingcaowu13@fudan.edu.cn
}

\end{document}